\documentclass{amsproc}
\usepackage{amssymb,amsmath,amsthm}
\usepackage{amscd}
\input amssym.def
\input amssym

\newtheorem{remark}{Remark}[section]
\newtheorem{defi}{Definition}[section]
\newtheorem{prop}{Proposition}[section]

\newtheorem{notation}{Notation}[section]

\newcommand{\be}{\begin{equation}}
\newcommand{\ee}{\end{equation}}

\renewcommand{\theequation}{\thesection.\arabic{equation}}
\renewcommand{\thetheorem}{\thesection.\arabic{theorem}}

\renewcommand{\theequation}{\thesection.\arabic{equation}}

\numberwithin{equation}{section}

\subjclass[2000]{17B69,05A40}

\begin{document}

\title[] 
{New perspectives on exponentiated derivations, the formal
Taylor theorem, and Fa\`a di Bruno's formula}

\author{Thomas J. Robinson}


\begin{abstract}
We discuss certain aspects of the formal calculus used to describe
vertex algebras.  In the standard literature on formal calculus, the
expression $(x+y)^{n}$, where $n$ is not necessarily a nonnegative
integer, is defined as the formal Taylor series given by the binomial
series in nonnegative powers of the second-listed variable (namely,
$y$).  We present a viewpoint that for some purposes of generalization
of the formal calculus including and beyond ``logarithmic formal
calculus,'' it seems useful, using the formal Taylor theorem as a
guide, to instead take as the definition of $(x+y)^{n}$ the formal
series which is the result of acting on $x^{n}$ by a formal
translation operator, a certain exponentiated derivation.  These
differing approaches are equivalent, and in the standard generality of
formal calculus or logarithmic formal calculus there is no reason to
prefer one approach over the other.  However, using this second point
of view, we may more easily, and in fact do, consider extensions in
two directions, sometimes in conjunction.  The first extension is to
replace $x^{n}$ by more general objects such as the formal variable
$\log x$, which appears in the logarithmic formal calculus, and also,
more interestingly, by iterated-logarithm expressions.  The second
extension is to replace the formal translation operator by a more
general formal change of variable operator.  In addition, we note some
of the combinatorics underlying the formal calculus which we treat,
and we end by briefly mentioning a connection to Fa\`a di Bruno's
classical formula for the higher derivatives of a composite function
and the classical umbral calculus.  Many of these results are
extracted from more extensive papers \cite{R1} and \cite{R2}, to
appear.
\end{abstract}

\maketitle

\renewcommand{\theequation}{\thesection.\arabic{equation}}
\renewcommand{\thetheorem}{\thesection.\arabic{theorem}}

\section{Introduction}
Our subject is certain aspects of the formal calculus used, as
presented in \cite{FLM}, to describe vertex algebras, although we do
not treat any issues concerning ``expansions of zero,'' which is at
the heart of the subject.  An important basic result which we describe
in detail is the formal Taylor theorem and this along with some
variations is the topic we mostly consider.  It is well known, and we
recall the simple argument below, that if we let $x$ and $y$ be
independent formal variables, then the formal exponentiated derivation
$e^{y\frac{d}{dx}}$, defined by the expansion, $\sum_{k \geq
0}y^{k}\left(\frac{d}{dx}\right)^{k}/k!$, acts on a (complex)
polynomial $p(x)$ as a formal translation in $y$.  That is, we have
\begin{align}
e^{y\frac{d}{dx}}p(x)=p(x+y).\label{eq:PFTT}
\end{align}
Formulas of this type, where one shows how a formal exponentiated
derivation acts as a formal translation over some suitable space, such
as polynomials, are the content of the various versions of the formal
Taylor theorem.  In the standard literature on formal calculus, the
expression $(x+y)^{n}$ is defined as a formal Taylor series given by
the binomial series in nonnegative powers of the second-listed
variable.  This notational convention is called the ``binomial
expansion convention,'' as in \cite{FLM}; cf. \cite{LL}.  (Such series
expansions often display interesting underlying combinatorics, as we
discuss below.)  We note that there are really two issues in this
notational definition.  One is the relevant ``expansion'' of interest,
which is easy but substantial mathematically.  The other is purely a
``convention'', namely, deciding which listed variable should be
expanded in nonnegative powers.  Of course, one needs such a
definition before even stating a formal Taylor theorem since one needs
to know how to define what we mean when we have a formal function
whose argument is $(x+y)$.  The issue of how to define $\log (x+y)$
for use in the recently developed logarithmic formal calculus is
parallel.  This issue originally arose in \cite{M}, where the author
introduced logarithmic modules and logarithmic intertwining operators.
In that context it was necessary to handle nonnegative integral powers
of the logarithmic variables.  In fact, the definition given there was
\begin{align*}
\log(x+y)=e^{y\frac{d}{dx}} \log x,
\end{align*}
where $\log x$ is a formal variable such that $\frac{d}{dx} \log
x=1/x$ (see Section 1.3 and in particular Proposition 1.5 in
\cite{M}).  The logarithmic calculus was then further developed in
detail in Section 3 of \cite{HLZ}, where it was used in setting up
some necessary language to handle the recently developed theory of
braided tensor categories of non-semisimple modules for a vertex
algebra.  Actually, in \cite{HLZ} the authors proved a more general
formal Taylor theorem than they strictly needed, one involving general
complex powers.  We discuss this issue of the generality of exponents
below.  In \cite{HLZ}, the authors used a more standard approach
which, as we have been discussing, is to define the relevant
expressions $p(x+y)$ via formal analytic expansions and to then prove
the desired formal Taylor theorem.  We argue that, in fact, for
certain purposes it is more convenient to use formulas of the form
\eqref{eq:PFTT} as the definition of $p(x+y)$, as was done in \cite{M}
in the important special case mentioned above where $p(x+y)=\log
(x+y)$, whenever we extend beyond the elementary case of polynomials,
but most especially if one wishes to extend beyond the logarithmic
formal calculus.

Actually, the necessary structure is contained in the ``automorphism
property,'' which for polynomials $p(x)$ and $q(x)$ says that
\begin{align*}
e^{y\frac{d}{dx}}(p(x)q(x))
=\left(e^{y\frac{d}{dx}}p(x)\right)\left(e^{y\frac{d}{dx}}q(x)\right).
\end{align*}
The various formal Taylor theorems may then be interpreted as
representations of the automorphism property which specialize properly
in the easy polynomial case.  We note that from this point of view the
``expansion'' part of the binomial expansion convention is not a
definition but a consequence.  (The ``convention'' part, which tells
which listed variable should be expanded in the direction of
nonnegative powers is, of course, retained in both approaches as the
choice of notational convention.)

Whenever it was necessary to formulate more general formal Taylor
theorems, such as in \cite{HLZ}, it was heuristically obvious that
they could be properly formulated in the standard approach but as soon
as one generalizes beyond the case of the logarithmic calculus then
there may be some tedious details to work out.  It is hoped that the
approach presented here may in the future make such generalization
more efficient. In particular, we show how to generalize to a space
that involves formal logarithmic variables iterated an arbitrary
number of times as an example to show how this approach may be applied
to desired generalizations.

We noted that the traditional approach to proving generalized formal
Taylor theorems via formal analytic expansions may be tedious, and
while narrowly speaking this is true, it is also true that these
expansions are themselves interesting.  Indeed, once we have firmly
established the algebra of the automorphism property and the formal
Taylor theorem relevant to any given context we may calculate formal
analytic expansions.  If there is more than one way to perform this
calculation we may equate the coefficients of the multiple expansions
and find a combinatorial identity.  We record certain such identities,
which turn out to involve the well-known Stirling numbers of the first
kind and thereby recover and generalize an identity similarly
considered in Section 3 of \cite{HLZ}, which was part of the
motivation for this paper.

We are sometimes also interested in exponentiating derivations other
than simply $\frac{d}{dx}$.  For instance, in \cite{M} and \cite{HLZ}
the authors needed to consider the operator $e^{yx\frac{d}{dx}}$.
Such exponentiated derivations were considered in \cite{FLM}, and in
fact much more general derivations appearing in the exponent have been
treated at length in \cite{H}, but we shall only consider a couple of
very special cases like those mentioned already.  We present what we
call ``differential representations,'' which help us to transfer
formulas involving one derivation to parallel formulas for a second
one which can be interpreted as a differential representation of the
first.  The automorphism property holds true for all derivations, but
the formal Taylor theorem becomes a parallel statement telling us that
another formal exponentiated derivation acts as a formal change of
variable other than translation.  For example, for a polynomial
$p(x)$, one may easily show that
\begin{align*}
e^{yx\frac{d}{dx}}p(x)=p\left(xe^{y}\right).
\end{align*}

There is additional very interesting material which the automorphism
property, the formal Taylor theorem and the notion of differential
representation lead to.  For instance, it turns out that certain of
the basic structures of the classical umbral calculus, which was
studied by G.C. Rota, D. Kahaner, A. Odlyzko and S. Roman
(\cite{Rot2}, \cite{Rot1}, \cite{Rot3} and \cite{Rom}), and certain
aspects of the exponential Riordan group, which was studied by
L.W. Shapiro, S. Getu, W.-J. Woan and L. C. Woodson (\cite{Sh1} and
\cite{Sh2}), may be naturally formulated and recovered in a similar
context to the one we are considering.  In this paper we only indicate
this connection in a brief comment.  Such material is treated in
\cite{R2}.

In Section \ref{sec:tradform} we give an expository review of the
traditional formulation of formal Taylor theorems.  In Section
\ref{sec:nontradform} we reformulate the material of the previous
section from the point of view that formal Taylor theorems may be
regarded as representations of the automorphism property.  In Section
\ref{sec:varch} we consider a relation between the formal translation
operator and a second formal change of variable operator.  In Section
\ref{sec:comb} we record some underlying combinatorics recovering, in
particular, a classical identity involving Stirling numbers of the
first kind, which was rediscovered in \cite{HLZ}.  Finally, in Section
\ref{sec:umb} we briefly show a connection to Fa\`a di Bruno's
classical formula for the higher derivatives of a composite function
following a proof given in \cite{FLM}, as well as a related connection
to the umbral calculus.

Many of the results in this paper were presented at the Quantum
Mathematics and Algebra Seminars at Rutgers University and at the
International Conference on Vertex Operator Algebras and Related Areas
(a conference to mark the occasion of Geoffrey Mason's 60th birthday)
held at Illinois State University July 7-11 2008.  Many thanks for all
the helpful comments made by the members of those seminars and at the
conference, in particular comments made by Prof. S. Sahi,
Prof. R. Goodman and Prof. M. Bergvelt. Also, of course, many thanks
for all the helpful discussions with my advisor, Prof. J. Lepowsky.

\section{The formal Taylor theorem: a traditional approach}
\label{sec:tradform}
We begin by recalling some elementary aspects of formal calculus
(cf. e.g. \cite{FLM}).  We write $\mathbb{C}[x]$ for the algebra of
 polynomials in a single formal variable $x$ over the complex
numbers; we write $\mathbb{C}[[x]]$ for the algebra of formal power
series in one formal variable $x$ over the complex numbers, and we
also use obvious natural notational extensions such as writing
$\mathbb{C}[x][[y]]$ for the algebra of formal power series in one
formal variable $y$ over $\mathbb{C}[x]$.  Further, we shall
frequently use the notation $e^{w}$ to refer to the formal exponential
expansion, where $w$ is any formal object for which such expansion
makes sense.  For instance, we have the linear operator
$e^{y\frac{d}{dx}}:\mathbb{C}[x] \rightarrow \mathbb{C}[x][[y]]$:
$$e^{y\frac{d}{dx}}=\sum_{n \geq
0}\frac{y^{n}}{n!}\left(\frac{d}{dx}\right)^{n}.$$

\begin{prop} (The ``automorphism property'') Let $A$ be an algebra
over $\mathbb{C}$.  Let $D$ be a derivation on $A$.  That is,
$D$ is a linear map from $A$ to itself which satisfies the product
rule:
\begin{align*}
D(ab)=(Da)b+a(Db) \text{ for all } a\text{ and }b \text{ in }A.
\end{align*}
Then 
\begin{align*}
e^{yD}(ab)=\left(e^{yD}a\right)\left(e^{yD}b\right).
\end{align*}
\end{prop}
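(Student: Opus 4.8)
The plan is to prove the identity
\[
e^{yD}(ab)=\left(e^{yD}a\right)\left(e^{yD}b\right)
\]
by comparing the coefficients of each power $y^{n}$ on both sides. Since $e^{yD}$ is defined as the formal series $\sum_{n\geq 0}\frac{y^{n}}{n!}D^{n}$, the left-hand side is $\sum_{n\geq 0}\frac{y^{n}}{n!}D^{n}(ab)$, and its coefficient of $y^{n}$ is $\frac{1}{n!}D^{n}(ab)$. The right-hand side is a product of two formal power series in $y$, so I would compute its coefficient of $y^{n}$ using the Cauchy product, obtaining
\[
\sum_{j+k=n}\frac{1}{j!}\,\frac{1}{k!}\left(D^{j}a\right)\left(D^{k}b\right).
\]
Equating these two expressions reduces the whole proposition to the single algebraic identity
\[
D^{n}(ab)=\sum_{j+k=n}\binom{n}{j}\left(D^{j}a\right)\left(D^{k}b\right),
\]
which is the higher-order Leibniz rule for a derivation.

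The key step is therefore to establish this higher Leibniz rule, and I would do so by induction on $n$. The base case $n=0$ is trivial, and the case $n=1$ is exactly the product rule defining $D$ as a derivation. For the inductive step I would apply $D$ to the expression $D^{n}(ab)=\sum_{j+k=n}\binom{n}{j}(D^{j}a)(D^{k}b)$, use linearity and the product rule on each summand, and then reindex the resulting double sum. Collecting terms and invoking Pascal's identity $\binom{n}{j}+\binom{n}{j-1}=\binom{n+1}{j}$ yields the formula with $n$ replaced by $n+1$, completing the induction.

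The main obstacle is not conceptual but bookkeeping: one must handle the reindexing of the sum after differentiating carefully so that the two families of terms line up correctly against the binomial coefficients, and one must keep track of the factorials when translating between the binomial-coefficient form of the Leibniz rule and the Cauchy-product form that arises from the exponential series. A mild subtlety worth a sentence of comment is that everything here is purely formal: there are no convergence issues because for a fixed power $y^{n}$ only finitely many terms contribute, so the comparison of coefficients is legitimate and the two sides agree as elements of $A[[y]]$. Once the higher Leibniz rule is in hand, the proposition follows immediately by the coefficient comparison described above.
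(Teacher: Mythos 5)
Your proposal is correct and follows essentially the same route as the paper: the paper's proof also reduces the identity to the higher-order Leibniz rule $D^{r}(ab)=\sum_{n=0}^{r}\binom{r}{n}(D^{r-n}a)(D^{n}b)$ and then divides by the factorial and sums to match the exponential series. You merely supply the induction on $n$ (via Pascal's identity) that the paper leaves implicit with its ``Notice that.''
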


\begin{proof}
Notice that
$$
D^{n}ab=\sum_{n=0}^{r}\binom{r}{n}D^{r-n}aD^{n}b.
$$
Then divide both sides by $n!$ and sum over $y$ and the result follows.
\end{proof}

\begin{prop}
\label{prop:poltayl}
(The polynomial formal Taylor theorem) For $p(x) \in \mathbb{C}[x]$, we have

\begin{align*}
e^{y\frac{d}{dx}}p(x)=p(x+y).
\end{align*} 
\end{prop}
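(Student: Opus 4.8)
The plan is to prove the statement by reducing the general polynomial case to the single monomial case $p(x) = x^n$, exploiting linearity of the operator $e^{y\frac{d}{dx}}$, and then to establish the monomial case by a direct computation that recognizes the resulting series as a binomial expansion. Since $e^{y\frac{d}{dx}}$ is a linear map and any polynomial is a finite $\mathbb{C}$-linear combination of monomials $x^n$, it suffices to verify the identity on each $x^n$; the general case follows immediately by linearity. This reduction is the natural first move because the monomials form a basis of $\mathbb{C}[x]$ and both sides of the asserted equality are linear in $p$.

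For the monomial case, I would compute the left-hand side directly from the defining expansion of the operator. Applying the definition gives
\begin{align*}
e^{y\frac{d}{dx}}x^{n}
=\sum_{k \geq 0}\frac{y^{k}}{k!}\left(\frac{d}{dx}\right)^{k}x^{n}.
\end{align*}
The key observation is that $\left(\frac{d}{dx}\right)^{k}x^{n}=\frac{n!}{(n-k)!}x^{n-k}$ for $0 \le k \le n$ and vanishes for $k > n$, so the sum is actually finite. Substituting this in and simplifying the factorials yields
\begin{align*}
\sum_{k=0}^{n}\frac{y^{k}}{k!}\cdot\frac{n!}{(n-k)!}x^{n-k}
=\sum_{k=0}^{n}\binom{n}{k}x^{n-k}y^{k},
\end{align*}
which is precisely the finite binomial expansion of $(x+y)^{n}$. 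Thus $e^{y\frac{d}{dx}}x^{n}=(x+y)^{n}$, as desired.

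An alternative, and perhaps more conceptual, route would be to invoke the automorphism property proved above. That proposition tells us $e^{y\frac{d}{dx}}$ is an algebra homomorphism, so it suffices to know its action on the single generator $x$ of $\mathbb{C}[x]$: one checks directly that $e^{y\frac{d}{dx}}x = x+y$, and then the homomorphism property forces $e^{y\frac{d}{dx}}x^{n}=(x+y)^{n}$ and hence the general polynomial result. I expect neither approach to present a genuine obstacle, since $n$ is here a nonnegative integer and all series terminate; the only point requiring a modicum of care is confirming that the operator is well defined as a map into $\mathbb{C}[x][[y]]$ and that the rearrangements are legitimate, but for polynomials the finiteness of every relevant sum makes this routine. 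The chief interest of the statement is conceptual rather than technical: it is the base case that the rest of the paper generalizes, and it is the identity in which the binomial expansion emerges as a consequence rather than a definition.
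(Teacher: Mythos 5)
Your proof is correct and is essentially identical to the paper's: reduce by linearity to the monomial case $p(x)=x^{m}$ and compute $e^{y\frac{d}{dx}}x^{m}=\sum_{n\geq 0}\binom{m}{n}x^{m-n}y^{n}=(x+y)^{m}$ directly from the exponential expansion. Your alternative route via the automorphism property is also sound and is in fact the argument the paper itself adopts when it revisits this proposition from the second point of view in Section 3.
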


\begin{proof}
By linearity we need only check the case where $p(x)=x^{m}$, $m$ a
nonnegative integer.  We simply calculate as follows:

\begin{align*}
e^{y\frac{d}{dx}}x^{m}&=\sum_{n \geq
0}\frac{y^{n}}{n!}\left(\frac{d}{dx}\right)^{n}x^{m}\\ &=\sum_{n \geq
0}\frac{y^{n}}{n!}(m)(m-1)\cdots (m-(n-1))x^{m-n}\\ &=\sum_{n \geq
0}\binom{m}{n}x^{m-n}y^{n}\\ &=(x+y)^{m}.
\end{align*}
\end{proof}
Here, so far, we are, of course, using only the simplest,
combinatorially defined binomial coefficients, $\binom{m}{n}$ with
$m,n \geq 0$.  We observe that the only ``difficult'' point in the
proof is knowing how to expand $(x+y)^{m}$ as an element in
$\mathbb{C}[x][[y]]$.  In other words, the classical binomial theorem
is at the heart of the proof of the polynomial formal Taylor theorem
as well as at the heart of the proof of the automorphism property.

In order to extend the polynomial formal Taylor theorem to handle the
case of Laurent polynomials, we extend the binomial notation to
include expressions $\binom{m}{n}$ with $m <0$ and we also recall the
binomial expansion convention:

\begin{defi}
\label{binexpconv1}
 \rm We write
\begin{align}
\label{eq:binexpconv1}
(x+y)^{m}=\sum_{n\geq 0}\binom{m}{n}x^{m-n}y^{n} \quad, m \in \mathbb{Z},
\end{align}
where we assign to $\binom{m}{n}$ the algebraic (rather than
combinatorial) meaning: for all $m \in \mathbb{Z}$ and $n$ nonnegative integers
\begin{align*}
\binom{m}{n}=\frac{(m)(m-1)\cdots (m-(n-1))}{n!}.
\end{align*}
\end{defi}

\begin{remark} \rm
\label{rem:bece}
In the above version of the binomial expansion convention we may
obviously generalize to let $m \in \mathbb{C}$.
\end{remark}

With our extended notation, as the reader may easily check, the above
proof of Proposition \ref{prop:poltayl} exactly extends to give:
\begin{prop}(The Laurent polynomial formal Taylor theorem) For $p(x) \in
\mathbb{C}[x,x^{-1}]$, we have
\begin{align*}
e^{y\frac{d}{dx}}p(x)=p(x+y).
\end{align*} 
\end{prop}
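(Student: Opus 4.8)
The plan is to reduce the Laurent polynomial case to a computation on the monomial basis exactly as in the proof of Proposition \ref{prop:poltayl}, with the only new ingredient being the treatment of negative powers of $x$. By linearity of the operator $e^{y\frac{d}{dx}}$, it suffices to verify the identity on a single basis element $p(x)=x^{m}$, where now $m$ is allowed to be any integer, positive, negative, or zero. The nonnegative cases are already handled by Proposition \ref{prop:poltayl}, so the genuinely new content is the case $m<0$.

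First I would write out $e^{y\frac{d}{dx}}x^{m}=\sum_{n\geq 0}\frac{y^{n}}{n!}\left(\frac{d}{dx}\right)^{n}x^{m}$ and compute the repeated derivative. Applying $\frac{d}{dx}$ a total of $n$ times to $x^{m}$ produces the falling-factorial coefficient $(m)(m-1)\cdots(m-(n-1))$ times $x^{m-n}$, and crucially this computation is valid verbatim for negative integers $m$ because differentiation of $x^{m}$ obeys the same power rule regardless of the sign of $m$. Dividing by $n!$ recognizes the coefficient as the algebraically-defined binomial coefficient $\binom{m}{n}$ from Definition \ref{binexpconv1}, so the sum becomes $\sum_{n\geq 0}\binom{m}{n}x^{m-n}y^{n}$.

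Finally I would invoke the extended binomial expansion convention \eqref{eq:binexpconv1}, which is precisely what defines $(x+y)^{m}$ as the element $\sum_{n\geq 0}\binom{m}{n}x^{m-n}y^{n}$ of $\mathbb{C}[x,x^{-1}][[y]]$ for all $m\in\mathbb{Z}$. This immediately identifies the computed series with $(x+y)^{m}=p(x+y)$, completing the verification on basis elements and hence, by linearity, on all of $\mathbb{C}[x,x^{-1}]$.

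There is essentially no hard part: the entire argument is the observation that the calculation in Proposition \ref{prop:poltayl} never used nonnegativity of $m$ once the binomial coefficients are given their algebraic meaning. The only point deserving a moment's care is bookkeeping about the target space, namely confirming that the resulting series genuinely lies in $\mathbb{C}[x,x^{-1}][[y]]$ so that the operator $e^{y\frac{d}{dx}}$ is well-defined on Laurent polynomials; this is immediate since each application of $\frac{d}{dx}$ preserves $\mathbb{C}[x,x^{-1}]$ and the expansion is in nonnegative powers of $y$.
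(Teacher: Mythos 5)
Your proof is correct and is exactly the argument the paper intends: the paper simply notes that the computation in Proposition \ref{prop:poltayl} ``exactly extends'' once the binomial coefficients are given their algebraic meaning via Definition \ref{binexpconv1}, which is precisely what you have written out in detail.
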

\begin{flushright} $\square$ \end{flushright}

\begin{notation} \rm
We write $\mathbb{C}\{[x]\}$ for the algebra of finite sums of monomials of the
form $cx^{r}$ where $c$ and $r \in \mathbb{C}$.
\end{notation}

As the reader may easily check, the above proof of Proposition
\ref{prop:poltayl} exactly extends even further to give:
\begin{prop}
(The generalized Laurent polynomial formal Taylor theorem) For $p(x)
\in \mathbb{C}\{[x]\}$, we have
\begin{align*}
e^{y\frac{d}{dx}}p(x)=p(x+y).
\end{align*} 
\end{prop}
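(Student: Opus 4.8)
The plan is to mimic exactly the proof of Proposition \ref{prop:poltayl}, exploiting linearity to reduce to a single monomial. By the definition of $\mathbb{C}\{[x]\}$ as finite sums of monomials $cx^{r}$ with $c,r \in \mathbb{C}$, and since $e^{y\frac{d}{dx}}$ is a linear operator, it suffices to verify the identity when $p(x)=x^{r}$ for an arbitrary $r \in \mathbb{C}$. The whole point is that the monomial computation never used that the exponent was a nonnegative integer, nor even an integer; it only used the rule $\frac{d}{dx}x^{r}=rx^{r-1}$, which we take to hold formally for all complex $r$.

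First I would write out the action explicitly, computing
\begin{align*}
e^{y\frac{d}{dx}}x^{r}
=\sum_{n \geq 0}\frac{y^{n}}{n!}\left(\frac{d}{dx}\right)^{n}x^{r}
=\sum_{n \geq 0}\frac{y^{n}}{n!}(r)(r-1)\cdots(r-(n-1))x^{r-n}
=\sum_{n \geq 0}\binom{r}{n}x^{r-n}y^{n},
\end{align*}
where the binomial coefficient carries the algebraic meaning of Definition \ref{binexpconv1} as extended to complex upper index in Remark \ref{rem:bece}. Then I would simply invoke the binomial expansion convention in the form sanctioned by Remark \ref{rem:bece}, which declares precisely that $(x+y)^{r}=\sum_{n\geq 0}\binom{r}{n}x^{r-n}y^{n}$ for $r \in \mathbb{C}$. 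Thus the right-hand side is by definition $(x+y)^{r}=p(x+y)$, and extending by linearity over the finite sum of monomials completes the argument.

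The only genuine content to check is well-definedness, namely that everything lands in the appropriate space of formal series. For a single monomial $x^{r}$ the output $\sum_{n\geq 0}\binom{r}{n}x^{r-n}y^{n}$ is a series with finitely many terms in each power of $y$ and bounded-below (indeed single) power of $x$ per $y$-degree, so it is a legitimate element of $\mathbb{C}\{[x]\}[[y]]$; a finite sum of such monomials remains such an element. I expect the main (and really only) obstacle to be essentially notational rather than mathematical: one must confirm that the symbol $(x+y)^{r}$ appearing on the right is being interpreted through the binomial expansion convention of Remark \ref{rem:bece} rather than by some independent definition, so that the theorem is a genuine identity of formal series and not a tautology hiding a convergence issue. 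Since the convention is exactly the expansion produced by the operator, the theorem and the convention match term by term, and no further estimation or manipulation is needed.
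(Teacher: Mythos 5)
Your proposal is correct and is essentially the paper's own argument: the paper states that the monomial calculation from the polynomial case ``exactly extends'' once the binomial coefficient and the binomial expansion convention are given their algebraic meaning for complex upper index, which is precisely the computation you carry out. Your added remarks on well-definedness and on $(x+y)^{r}$ being interpreted via the convention rather than independently are consistent with the paper's setup and do not change the approach.
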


\begin{remark} 
\label{polyrem}
\rm There is an alternate approach to get the generalized Laurent
polynomial formal Taylor theorem, an approach which has the advantage
that no additional calculation is necessary in the final proof.  The
argument is simple.  For $r \in \mathbb{C}$, we need to verify that
\begin{align*}
e^{y\frac{d}{dx}}x^{r}=(x+y)^{r}.
\end{align*}
Now simply notice that both expressions lie in
\begin{align*}
\mathbb{C}x^{r}[x^{-1}][[y]]
\end{align*}
with coefficients being polynomials in $r$.  But the polynomials on
matching monomials agree for $r$ a nonnegative integer and so they must
be identical.  An argument in essentially this style appeared in
\cite{HLZ} to prove a logarithmic formal Taylor theorem (Theorem 3.6
of \cite{HLZ}).
\end{remark}

We now extend our considerations to a logarithmic case.

\begin{defi} \rm
Let $\log x$ be a formal variable commuting with $x$ and $y$ such that
$\frac{d}{dx} \log x=x^{-1}$.
\end{defi}

We shall need to define expressions involving $\log (x+y)$.  In
parallel with (\ref{eq:binexpconv1}) we shall define $(\log
(x+y))^{r}$, $r \in \mathbb{C}$, by its formal analytic expansion:

\begin{notation} \rm
We write
\begin{align}
\label{logdef}
(\log (x+y))^{r}&=\left( \log x + \log \left(1 +\frac{y}{x}\right)\right)^{r},
\end{align}
where we make a second use of the symbol ``$\log$'' to mean the usual
formal analytic expansion, namely
\begin{align*}
\log(1+X)=\sum_{i \geq 0}\frac{(-1)^{i-1}}{i}X^{i},
\end{align*}
and where we expand (\ref{logdef}) according to the binomial expansion
convention.
\end{notation}

\begin{remark} \rm
We note that (\ref{logdef}) is a special case of the definition used
in the treatment of logarithmic formal calculus in \cite{HLZ}.  Our
special case avoids the complication of the generality, treated in
\cite{HLZ}, of (uncountable, non-analytic) sums over $r \in
\mathbb{C}$.
\end{remark}

\begin{remark} \rm
The reader will need to distinguish from context which use of
``$\log$'' is meant.
\end{remark}

\begin{prop}
\label{prop:gplftt}
(The generalized polynomial logarithmic formal Taylor theorem) For
$p(x) \in \mathbb{C}\{ [x, \log x] \}$, we have
\begin{align*}
e^{y\frac{d}{dx}}p(x)=p(x+y).
\end{align*} 
\end{prop}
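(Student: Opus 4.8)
The plan is to prove the generalized polynomial logarithmic formal Taylor theorem by reducing the general statement to a one-dimensional family of monomials and then invoking a polynomial-identity argument in the style of Remark \ref{polyrem}. Since the algebra $\mathbb{C}\{[x,\log x]\}$ consists of finite sums of monomials of the form $c\,x^{r}(\log x)^{s}$ with $c,r\in\mathbb{C}$ and $s$ a nonnegative integer, and since $e^{y\frac{d}{dx}}$ is linear, I first reduce to verifying
\begin{align*}
e^{y\frac{d}{dx}}\bigl(x^{r}(\log x)^{s}\bigr)=(x+y)^{r}(\log(x+y))^{s}.
\end{align*}
The right-hand side is defined via the binomial expansion convention together with the expansion (\ref{logdef}), so both sides are to be compared as elements of a suitable space of formal series, in nonnegative powers of $y$, in which $x^{r}$, $x^{-1}$, and $\log x$ all appear.

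The key structural tool is the automorphism property. Because $\frac{d}{dx}$ is a derivation on $\mathbb{C}\{[x,\log x]\}$ (it sends $x^{r}\mapsto r x^{r-1}$ and $\log x\mapsto x^{-1}$, extending by the product rule), the automorphism property gives
\begin{align*}
e^{y\frac{d}{dx}}\bigl(x^{r}(\log x)^{s}\bigr)
=\Bigl(e^{y\frac{d}{dx}}x^{r}\Bigr)\Bigl(e^{y\frac{d}{dx}}\log x\Bigr)^{s}.
\end{align*}
The first factor is handled by the generalized Laurent polynomial formal Taylor theorem (Remark \ref{polyrem}), which already gives $e^{y\frac{d}{dx}}x^{r}=(x+y)^{r}$. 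So the remaining task is to establish the single logarithmic case $e^{y\frac{d}{dx}}\log x=\log(x+y)$, after which the general monomial identity follows by multiplicativity and matching with the defining expansion (\ref{logdef}), where $(\log(x+y))^{s}$ is by definition the $s$-th power of the expanded logarithm.

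To prove $e^{y\frac{d}{dx}}\log x=\log(x+y)$, I would compute the left-hand side directly. Applying powers of $\frac{d}{dx}$ to $\log x$ gives $\frac{d}{dx}\log x=x^{-1}$ and $\left(\frac{d}{dx}\right)^{n}\log x=(-1)^{n-1}(n-1)!\,x^{-n}$ for $n\geq 1$, so that
\begin{align*}
e^{y\frac{d}{dx}}\log x=\log x+\sum_{n\geq 1}\frac{(-1)^{n-1}}{n}\left(\frac{y}{x}\right)^{n}.
\end{align*}
By the definition of the formal analytic expansion of $\log$, the displayed sum is exactly $\log\bigl(1+\tfrac{y}{x}\bigr)$, and hence the right-hand side equals $\log x+\log(1+\tfrac{y}{x})$, which is precisely $\log(x+y)$ as defined in (\ref{logdef}). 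This completes the one logarithmic variable case and, combined with the automorphism property and the already-established power case, the full theorem.

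The main obstacle is essentially bookkeeping rather than conceptual: one must be careful that all the manipulations take place in a fixed formal space in which the various series (the binomial expansion of $(x+y)^{r}$, the powers of $\log(x+y)$, and the output of $e^{y\frac{d}{dx}}$) genuinely live as well-defined elements, and that the automorphism property applies to the derivation $\frac{d}{dx}$ on this enlarged algebra. Once the ambient space is pinned down and the closure of $\mathbb{C}\{[x,\log x]\}$ under $\frac{d}{dx}$ is checked, the proof is a direct computation together with the reductions above; I do not expect any genuine difficulty beyond verifying that the binomial-expansion-convention definition of $(\log(x+y))^{s}$ matches the multiplicative output of the automorphism property.
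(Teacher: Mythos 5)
There is a genuine gap: you have misread the scope of the space $\mathbb{C}\{[x,\log x]\}$ and as a result your proof only covers nonnegative integer powers of $\log x$. The paper's notation $\mathbb{C}\{[\,\cdot\,]\}$ denotes finite sums of monomials with arbitrary \emph{complex} exponents, so the proposition requires handling $p(x)=x^{r}(\log x)^{s}$ with both $r$ and $s$ in $\mathbb{C}$; indeed the paper's own proof reduces to the case $p(x)=(\log x)^{r}$ with $r\in\mathbb{C}$, and the definition (\ref{logdef}) of $(\log(x+y))^{r}$ is stated for complex $r$ precisely because the binomial expansion convention (extended to complex exponents as in Remark \ref{rem:bece}) is needed there. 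Your reduction via the automorphism property to $\bigl(e^{y\frac{d}{dx}}\log x\bigr)^{s}$ only makes sense when $s$ is a nonnegative integer, so the case that actually makes the theorem ``generalized'' is untouched.

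The missing idea is the one the paper borrows from Remark \ref{polyrem}, applied now in the exponent of the logarithm rather than of $x$: one observes that both $e^{y\frac{d}{dx}}(\log x)^{r}$ and the expansion of $(\log(x+y))^{r}$ lie in $\mathbb{C}[r](\log x)^{r}\mathbb{C}[(\log x)^{-1},x^{-1}][[y]]$, i.e.\ each coefficient is a polynomial in $r$, so agreement for all positive integers $r$ forces agreement for all complex $r$. A second application of the automorphism property then reduces the positive integer case to $r=1$, which is the direct computation $e^{y\frac{d}{dx}}\log x=\log x+\log\bigl(1+\tfrac{y}{x}\bigr)$ that you do carry out correctly. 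So the computational core of your argument matches the paper's, but without the polynomial-in-$r$ interpolation step the proof does not establish the stated proposition, only its restriction to integral logarithmic exponents.
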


\begin{proof}
By linearity and the automorphism property, we need only check the
case $p(x)=(\log x)^{r},$ $ r \in \mathbb{C}$.  We could proceed by
explicitly calculating
\begin{align*}
e^{y\frac{d}{dx}}(\log x)^{r},
\end{align*}
but this is somewhat involved.  Instead we argue as in Remark
\ref{polyrem} to reduce to the case $r=1$.  Even without explicitly
calculating $e^{y\frac{d}{dx}}(\log x)^{r}$, it is not hard to see
that it is in
\begin{align*}
\mathbb{C}[r](\log x)^{r}\mathbb{C}[(\log x)^{-1},x^{-1}][[y]].
\end{align*}
When we expand (\ref{logdef}) we find that it is also in
\begin{align*}
\mathbb{C}[r](\log x)^{r}\mathbb{C}[(\log x)^{-1},x^{-1}][[y]].
\end{align*}
Thus we only need to check the case for $r$ a positive integer.  
A second application of the automorphism property now shows that we
only need the case where $r=1$.  This case is not difficult to
calculate:

\begin{align*}
e^{y\frac{d}{dx}} \log x&=\log x+\sum_{i \geq 1}
\frac{y^{i}}{i!}\left(\frac{d}{dx}\right)^{i}\log x \\
&=\log x+\sum_{i \geq 1}\frac{y^{i}}{i!}\left(\frac{d}{dx}\right)^{i-1}x^{-1}\\
&=\log x+\sum_{i \geq 1}\frac{y^{i}}{i}(-1)^{i-1}x^{-i}\\
&=\log x + \log\left(1 +\frac{y}{x}\right).
\end{align*}
\end{proof}

\begin{remark} \rm
Although we are working in a more special case than that considered in
\cite{HLZ}, the argument presented in the proof of Proposition
\ref{prop:gplftt} could be used as a replacement for much of the
algebraic proof of Theorem 3.6 in \cite{HLZ} as long as one is not
concerned with calculating explicit formal analytic expansions and
checking the corresponding combinatorics.  These two approaches are
very similar, however, the difference only being how much work is left
implicit.  In the next section we shall take a different point of view
altogether.
\end{remark}

\section{The formal Taylor theorem from a different point of view}
\label{sec:nontradform}
{}From the examples in Section \ref{sec:tradform} we see a common
strategy for formulating a formal Taylor theorem:

1) Pick some reasonable space (e.g., $\mathbb{C}[x]$,
   $\mathbb{C}\{[x,\log x]\}$) on which $\frac{d}{dx}$ acts in a
   natural way.  The space need not be an algebra, but in this paper
   we shall only consider this case.

2) Choose a plausible formal analytic expansion of relevant
   expressions involving $x+y$ (e.g., $(x+y)^{r},\, r \in \mathbb{C},
   \,\log(x+y)$).

3) Consider the equality $e^{y\frac{d}{dx}}p(x)=p(x+y)$ and either
   directly expand both sides to show equality or if necessary use a
   trick like in Remark \ref{polyrem}.

Step 2 is necessarily anticipatory and dependent on formal analytic
expressions.  Therefore it seems natural to replace Step 2 by simply
defining expressions involving $x+y$ in terms of the operator
$e^{y\frac{d}{dx}}$.  Then the formal Taylor theorem is trivially
true, being viewed now as a (plausible) representation of the
underlying structure of the automorphism property.  We redo the
previous work from this point of view.

\begin{prop} 
(The polynomial formal Taylor theorem) For $p(x) \in \mathbb{C}[x]$,
we have
\begin{align*}
e^{y\frac{d}{dx}}p(x)=p(x+y).
\end{align*} 
\end{prop}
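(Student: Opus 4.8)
The plan is to reverse the logic of Section~\ref{sec:tradform}. Rather than first prescribing a formal analytic expansion for $p(x+y)$ and then verifying the equality, I would let the operator itself furnish the definition. Concretely, for each nonnegative integer $m$ I would \emph{define}
\begin{align*}
(x+y)^{m} := e^{y\frac{d}{dx}}x^{m} \in \mathbb{C}[x][[y]],
\end{align*}
and then extend by linearity: for $p(x)=\sum_{m}c_{m}x^{m}$, set $p(x+y):=\sum_{m}c_{m}(x+y)^{m}$. With this convention the asserted identity is immediate, since linearity of $e^{y\frac{d}{dx}}$ gives $e^{y\frac{d}{dx}}p(x)=\sum_{m}c_{m}\,e^{y\frac{d}{dx}}x^{m}=\sum_{m}c_{m}(x+y)^{m}=p(x+y)$. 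From this point of view the formal Taylor theorem is trivially true.

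The genuine content, and the step I would treat most carefully, is checking that this definition is \emph{reasonable} --- that the formal symbol $x+y$ behaves like an honest element of an algebra rather than a mere label. This is exactly what the automorphism property supplies. Applying it with $D=\frac{d}{dx}$ yields
\begin{align*}
(x+y)^{m+n}=e^{y\frac{d}{dx}}(x^{m}x^{n})=\left(e^{y\frac{d}{dx}}x^{m}\right)\left(e^{y\frac{d}{dx}}x^{n}\right)=(x+y)^{m}(x+y)^{n},
\end{align*}
so powers of $x+y$ multiply correctly, and more generally $(pq)(x+y)=p(x+y)\,q(x+y)$. Thus the definition respects the multiplicative structure of $\mathbb{C}[x]$, which is precisely what legitimizes writing $p(x+y)$ at all.

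The main obstacle here is therefore conceptual rather than computational: once Step~2 of the traditional strategy is replaced by a definition, there is nothing left to calculate, and the former theorem is recast as a representation of the automorphism property. If I then wished to reconnect with the traditional formulation, I would compute $e^{y\frac{d}{dx}}x^{m}$ explicitly, recovering $\sum_{n\geq 0}\binom{m}{n}x^{m-n}y^{n}$ exactly as in the proof of Proposition~\ref{prop:poltayl}; under the present viewpoint this binomial expansion emerges as a \emph{consequence} of the definition rather than as its starting point.
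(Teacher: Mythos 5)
Your argument is correct in spirit and reaches the same conclusion, but it takes a genuinely different route from the paper's. The paper's proof of this proposition is the one-line computation $e^{y\frac{d}{dx}}p(x)=p\left(e^{y\frac{d}{dx}}x\right)=p(x+y)$: linearity plus the automorphism property let one pull the exponentiated derivation inside $p$, and then $e^{y\frac{d}{dx}}x=x+y$ is a literal, terminating computation, so that $p(x+y)$ means honest substitution of the element $x+y\in\mathbb{C}[x,y]$ into the polynomial $p$. The proposition therefore retains genuine (if modest) content. You instead invoke the ``replacement step'' already at the polynomial level, defining $(x+y)^{m}:=e^{y\frac{d}{dx}}x^{m}$ and extending by linearity, which makes the asserted identity true by fiat; the paper reserves that definitional move (Definition \ref{def:trans}) for the generalized Laurent case $r\in\mathbb{C}$, where $x^{r}$ lives in no ambient algebra into which the element $x+y$ can be substituted directly. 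Your consistency check via the automorphism property is the right instinct, but to fully legitimize the definition you should also record explicitly that $e^{y\frac{d}{dx}}x=x+y$ and $e^{y\frac{d}{dx}}1=1$; combined with your multiplicativity identity this shows the defined $(x+y)^{m}$ coincides with the $m$-th power of the honest element $x+y$, without which the proposition risks being an empty statement about a symbol. What your approach buys is uniformity, since the polynomial case is then treated identically to all the later cases; what the paper's approach buys is an anchor, in that the easy polynomial case remains a real theorem against which the subsequent definitions for general exponents and for $\log$ are seen to be consistent extensions.
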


\begin{proof}We have by the automorphism property:
$$
e^{y\frac{d}{dx}}p(x)=p\left(e^{y\frac{d}{dx}}x\right)=p(x+y).
$$
\end{proof}

Now for the replacement step:

\begin{defi} \rm
We write
\label{def:trans}
\begin{align*}
(x+y)^{r}=e^{y\frac{d}{dx}}x^{r} \qquad \text{for} \quad r \in \mathbb{C}.
\end{align*}
\end{defi}

\begin{remark} \rm
Of course, Definition \ref{def:trans} is equivalent to Definition
\ref{binexpconv1} together with Remark \ref{rem:bece}.  This definition
immediately leads to the most convenient proofs of certain
``expected'' basic properties, instead of needing to wait (as is often
done) to prove a formal Taylor theorem to officially obtain these
proofs.  For example, we have:

\begin{align*}
(x+y)^{r+s}&=e^{y\frac{d}{dx}}x^{r+s}\\
&=e^{y\frac{d}{dx}}(x^{r}x^{s})\\
&=\left(e^{y\frac{d}{dx}}x^{r}\right)\left(e^{y\frac{d}{dx}}x^{s}\right)\\
&=(x+y)^{r}(x+y)^{s}.
\end{align*}
\end{remark}

\begin{prop} (The generalized Laurent polynomial formal 
Taylor theorem) For $p(x) \in \mathbb{C} \{ [x] \}$, 
\begin{align*}
e^{y\frac{d}{dx}}p(x)=p(x+y).
\end{align*} 
\end{prop}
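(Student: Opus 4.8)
The plan is to observe that, from the viewpoint adopted in this section, the theorem is essentially a restatement of Definition \ref{def:trans} together with the linearity of the operator $e^{y\frac{d}{dx}}$, so that almost nothing needs to be computed. First I would pin down the meaning of the right-hand side. For an element $p(x)=\sum_{i} c_{i}x^{r_{i}}\in\mathbb{C}\{[x]\}$, a finite sum with $c_{i},r_{i}\in\mathbb{C}$, I interpret $p(x+y)$ as the linear extension $\sum_{i} c_{i}(x+y)^{r_{i}}$, where each individual factor $(x+y)^{r_{i}}$ is understood in the sense of Definition \ref{def:trans}. Since the monomials $x^{r}$ for distinct $r\in\mathbb{C}$ are linearly independent, the expression of a given $p(x)$ as such a finite sum is unique, and hence this interpretation of $p(x+y)$ is unambiguous.

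With that understood, I would reduce to a single monomial. Because $e^{y\frac{d}{dx}}$ is a linear operator, it suffices to verify the asserted identity on each summand, and the scalar $c_{i}$ simply factors out; the whole claim therefore collapses to checking $e^{y\frac{d}{dx}}x^{r}=(x+y)^{r}$ for a single $r\in\mathbb{C}$. But this equality is \emph{precisely} Definition \ref{def:trans}, so there is nothing further to calculate. Reassembling by linearity then gives
\begin{align*}
e^{y\frac{d}{dx}}p(x)=\sum_{i} c_{i}\,e^{y\frac{d}{dx}}x^{r_{i}}=\sum_{i} c_{i}(x+y)^{r_{i}}=p(x+y),
\end{align*}
which is the theorem. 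Note that, unlike the polynomial case treated just above, the automorphism property is not even needed here, since we never build $x^{r}$ out of products of $x$; we invoke the definition directly.

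I do not expect a genuine obstacle. The entire purpose of the present point of view is that the formal Taylor theorem becomes trivial once $(x+y)^{r}$ has been \emph{defined} as the image of $x^{r}$ under the translation operator. The only step deserving a moment's care is the bookkeeping of the first paragraph, namely fixing what $p(x+y)$ means for a finite sum of arbitrary complex powers and confirming that the linear extension is well-defined; after that, linearity and the definition do all the work. This stands in deliberate contrast to the traditional treatment of Section \ref{sec:tradform}, where the corresponding proposition rested on the binomial expansion convention and an explicit expansion of $e^{y\frac{d}{dx}}x^{r}$.
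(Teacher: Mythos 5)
Your proof is correct and matches the paper's approach: the paper simply declares the result ``trivial,'' and your write-up is exactly the unpacking of that triviality --- linearity reduces to monomials $x^{r}$, where the identity is Definition \ref{def:trans} verbatim. The extra care you take in fixing the meaning of $p(x+y)$ by linear extension is a reasonable elaboration of what the paper leaves implicit, and your observation that the automorphism property is not needed here is also consistent with the paper, which reserves it for the logarithmic case.
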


\begin{proof}
This is trivial.
\end{proof}

We also have this example of the replacement step:
\begin{defi} \rm
We write
\begin{align*}
(\log (x+y))^{r}=e^{y\frac{d}{dx}} (\log x)^{r} \qquad \text{for}
\quad r \in \mathbb{C}.
\end{align*}
\end{defi}

\begin{prop} 
(The generalized polynomial logarithmic formal Taylor theorem) For
$p(x) \in \mathbb{C} \{ [x, \log x] \}$,
\begin{align*}
e^{y\frac{d}{dx}}p(x)=p(x+y).
\end{align*} 
\end{prop}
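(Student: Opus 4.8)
The plan is to reduce to basis elements and then invoke the automorphism property, exactly in the spirit of the earlier ``replacement-step'' proofs in this section. Since we are now defining $(\log(x+y))^{r} = e^{y\frac{d}{dx}}(\log x)^{r}$ and $(x+y)^{r} = e^{y\frac{d}{dx}}x^{r}$ directly via the operator, the content of the theorem is no longer a substantive calculation but rather a consistency check that these definitions cohere with the algebra structure.

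First I would observe that $\mathbb{C}\{[x,\log x]\}$ is spanned by monomials of the form $x^{s}(\log x)^{r}$ with $r,s \in \mathbb{C}$, so by linearity it suffices to treat $p(x) = x^{s}(\log x)^{r}$. Next I would apply the automorphism property (the first Proposition of Section~\ref{sec:tradform}) to split the action of $e^{y\frac{d}{dx}}$ across the product:
\begin{align*}
e^{y\frac{d}{dx}}\left(x^{s}(\log x)^{r}\right)
=\left(e^{y\frac{d}{dx}}x^{s}\right)\left(e^{y\frac{d}{dx}}(\log x)^{r}\right).
\end{align*}
Then I would invoke the two defining conventions directly: the first factor is $(x+y)^{s}$ by Definition~\ref{def:trans}, and the second factor is $(\log(x+y))^{r}$ by the definition immediately preceding the statement. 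Recombining the factors gives $(x+y)^{s}(\log(x+y))^{r}$, which is precisely $p(x+y)$ under the natural reading of $p$ evaluated at $x+y$.

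The one point requiring genuine care — and what I expect to be the main obstacle — is the justification that $p(x+y)$ is \emph{well-defined} and that the recombination step is legitimate. Because the definitions are imposed separately on the two types of factors, one must confirm that the notation $p(x+y)$ for a general element of $\mathbb{C}\{[x,\log x]\}$ is unambiguous and agrees with applying $e^{y\frac{d}{dx}}$ to $p(x)$ monomial-by-monomial; this is exactly where the automorphism property does the real work, since it guarantees that the operator respects products and hence that defining the translation on the two generating families $x^{r}$ and $(\log x)^{r}$ determines it consistently on all of $\mathbb{C}\{[x,\log x]\}$. Everything else is formal and, as in the preceding proposition, essentially trivial once the automorphism property is in hand.
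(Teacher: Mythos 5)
Your proposal is correct and follows essentially the same route as the paper's own (one-line) proof: reduce by linearity to monomials $x^{s}(\log x)^{r}$, split the product with the automorphism property, and observe that each factor is handled trivially by the operator-based definitions of $(x+y)^{s}$ and $(\log(x+y))^{r}$. Your added remark about why recombination is legitimate is a reasonable elaboration of what the paper leaves implicit, not a departure from its argument.
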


\begin{proof}
The result follows by considering the trivial cases $p(x)=x^{r}$ and
$p(x)=(\log x )^{r}$ for $r \in \mathbb{C}$ and applying the
automorphism property.
\end{proof}

The formal analytic expansions are now viewed as calculations rather
than definitions or conventions.  So, for instance, we may calculate
the expansions (\ref{eq:binexpconv1}) and (\ref{logdef}) as
consequences rather than viewing them as definitions.

\section{More general formal changes of variable}
\label{sec:varch}
There are other formal Taylor-like theorems involving, for instance,
the exponentiated derivation, $e^{yx\frac{d}{dx}}$.
To recover such results we could repeat a complete parallel set of
reasoning beginning with the automorphism property applied to the
desired derivation.  However, instead of starting over from the
beginning, we show how to ``lift'' them from the formal Taylor
theorems we have already proved.  This sort of method has the added
benefit of showing relationships between different derivations instead
of obtaining isolated results.

To proceed properly we need to look at one more extension of the
formal Taylor theorem. To this end we let $\ell_{n}(x)$ be formal
commuting variables for $n \in \mathbb{Z}$. We define an action of
$\frac{d}{dx}$, a derivation, on
$$\mathbb{C}[\dots,\ell_{-1}(x)^{\pm 1},\ell_{0}(x)^{\pm
1},\ell_{1}(x)^{\pm 1},\dots]$$ (which for short we denote by
$\mathbb{C}[\ell^{\pm 1}]$) by
\begin{align*}
\frac{d}{dx}\ell_{-n}(x)&=\prod_{i=-1}^{-n}\ell_{i}(x),\\
\frac{d}{dx}\ell_{n}(x)&=\prod_{i=0}^{n-1}\ell_{i}(x)^{-1},\\
\text{\rm and   } \qquad \frac{d}{dx}\ell_{0}(x)&=1,
\end{align*}
for $n > 0$.  Secretly, $\ell_{n}(x)$ is the $(-n)$-th iterated
exponential for $n < 0$ and the n-th iterated logarithm for $n > 0$
and $\ell_{0}(x)$ is $x$ itself.  We make the following, by now typical,
definition in order to obtain a formal Taylor theorem.

\begin{defi} \rm
Let
\begin{align*}
\ell_{n}(x+y)=e^{y\frac{d}{dx}}\ell_{n}(x) \qquad \text{for} \quad n
\in \mathbb{Z}.
\end{align*}
\end{defi}
This gives:

\begin{prop}
(The iterated exponential/logarithmic formal Taylor theorem) For $p(x)
\in \mathbb{C}[\ell^{\pm 1}]$ we have:
\begin{align*}
e^{y\frac{d}{dx}}p(x)=p(x+y).
\end{align*} 
\end{prop}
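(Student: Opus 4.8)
The plan is to argue in the spirit of Section~\ref{sec:nontradform}: since $\ell_{n}(x+y)$ is \emph{defined} to be $e^{y\frac{d}{dx}}\ell_{n}(x)$, the statement merely asserts that the operator $e^{y\frac{d}{dx}}$ and the substitution $p(x)\mapsto p(x+y)$ coincide on all of $\mathbb{C}[\ell^{\pm 1}]$, and I would prove this by recognizing both as algebra homomorphisms $\mathbb{C}[\ell^{\pm 1}]\to\mathbb{C}[\ell^{\pm 1}][[y]]$ that agree on the generators $\ell_{n}(x)^{\pm 1}$. Note first that $e^{y\frac{d}{dx}}$ is well defined, since each power $\left(\frac{d}{dx}\right)^{m}$ preserves $\mathbb{C}[\ell^{\pm 1}]$, so the exponential is a genuine power series in $y$ with coefficients in $\mathbb{C}[\ell^{\pm 1}]$.

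Concretely, I would first observe that $e^{y\frac{d}{dx}}$ is a unital algebra homomorphism: it is linear, it fixes $1$ because $\frac{d}{dx}1=0$, and it is multiplicative by the automorphism property. By linearity it then suffices to treat a single monomial $p(x)=\prod_{n}\ell_{n}(x)^{k_{n}}$ (a finite product with $k_{n}\in\mathbb{Z}$), and iterating the automorphism property reduces this to checking each factor $\ell_{n}(x)^{k_{n}}$ separately. For $k_{n}\geq 0$ the equality $e^{y\frac{d}{dx}}\ell_{n}(x)^{k_{n}}=\ell_{n}(x+y)^{k_{n}}$ is immediate from the definition of $\ell_{n}(x+y)$ and multiplicativity; for $k_{n}<0$ I would apply $e^{y\frac{d}{dx}}$ to $\ell_{n}(x)\,\ell_{n}(x)^{-1}=1$, so that $e^{y\frac{d}{dx}}\ell_{n}(x)^{-1}$ is exhibited as the inverse of $\ell_{n}(x+y)$ and hence equals $\ell_{n}(x+y)^{-1}$. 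Reassembling the factors gives $e^{y\frac{d}{dx}}p(x)=\prod_{n}\ell_{n}(x+y)^{k_{n}}=p(x+y)$.

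Within this framework no step is genuinely difficult, and the proof should be as brief as its counterparts in Section~\ref{sec:nontradform}; the main thing to watch is simply what is new relative to those settings, namely the presence of the inverse generators $\ell_{n}(x)^{-1}$ and the fact that there are now infinitely many variables. The inverses are dealt with by the relation $\ell_{n}(x)\,\ell_{n}(x)^{-1}=1$ just used, which simultaneously shows $\ell_{n}(x+y)$ is a unit and identifies its inverse, thereby also confirming that the substitution assignment extends to a well-defined homomorphism on all of $\mathbb{C}[\ell^{\pm 1}]$. The infinitude of variables is harmless, since each element is a finite sum of monomials involving only finitely many $\ell_{n}(x)$. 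I would emphasize that this argument never invokes the explicit formulas defining $\frac{d}{dx}$ on the $\ell_{n}(x)$; those enter only if one later wishes to compute the corresponding formal analytic expansions.
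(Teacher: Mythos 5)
Your proposal is correct and is essentially the paper's own argument: the paper's entire proof is ``The result follows from the automorphism property,'' and your write-up simply makes explicit the reduction to generators, the handling of the inverses via $\ell_{n}(x)\,\ell_{n}(x)^{-1}=1$, and the harmlessness of having infinitely many variables. No difference in approach, only in the level of detail.
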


\begin{proof}
The result follows from the automorphism property.
\end{proof}

Now consider the substitution map 
\begin{align*}
\phi: \mathbb{C}[\ell^{\pm 1}] \rightarrow \mathbb{C}[\ell^{\pm 1}]
\end{align*}
and its inverse defined by 
\begin{align*}
\phi (\ell_{n}(x))&=\ell_{n+1}(x) \qquad \text{for} \quad n \in \mathbb{Z}\\
( \text{\rm and } \qquad \phi^{-1} (\ell_{n}(x))&=\ell_{n-1}(x) \qquad
\text{for} \quad n \in \mathbb{Z}).
\end{align*}
 
\begin{prop}
We have
\begin{align*}
\phi \circ \frac{d}{dx}&=\ell_{0}(x)\frac{d}{dx} \circ \phi\\
\text{\rm and } \qquad \phi^{-1} \circ \ell_{0}(x)\frac{d}{dx}&=
\frac{d}{dx} \circ \phi^{-1}.
\end{align*}
\end{prop}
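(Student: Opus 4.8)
The plan is to verify both operator identities by checking that they agree when applied to each generator $\ell_{n}(x)$ of the algebra $\mathbb{C}[\ell^{\pm 1}]$; since $\phi$, $\phi^{-1}$, and $\frac{d}{dx}$ are all determined by their action on these generators (the first two being algebra homomorphisms and $\frac{d}{dx}$ a derivation), agreement on generators will suffice. Observe also that the second identity is simply a rearrangement of the first: applying $\phi^{-1}$ on the left and right of $\phi \circ \frac{d}{dx} = \ell_{0}(x)\frac{d}{dx} \circ \phi$ yields $\frac{d}{dx} \circ \phi^{-1} = \phi^{-1} \circ \ell_{0}(x)\frac{d}{dx}$, so I expect to prove only the first identity and then deduce the second formally.

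First I would compute the left-hand side $\phi \circ \frac{d}{dx}$ on $\ell_{n}(x)$ using the defining action of $\frac{d}{dx}$, then apply $\phi$ (which raises every index by one) to the resulting product. Separately I would compute the right-hand side $\ell_{0}(x)\frac{d}{dx}\circ\phi$ by first applying $\phi$ to get $\ell_{n+1}(x)$, then differentiating and multiplying by $\ell_{0}(x)$. The key structural fact to exploit is that the derivative formulas for $\ell_{n}(x)$ are built from products $\prod_{i=0}^{n-1}\ell_{i}(x)^{-1}$ (and the analogous products for negative indices), so that applying $\phi$ shifts such a product to $\prod_{i=1}^{n}\ell_{i}(x)^{-1}$, which differs from $\frac{d}{dx}\ell_{n+1}(x) = \prod_{i=0}^{n}\ell_{i}(x)^{-1}$ by exactly one extra factor of $\ell_{0}(x)^{-1}$. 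Multiplying by $\ell_{0}(x)$ on the right-hand side then reconciles the two expressions.

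The main obstacle, such as it is, will be bookkeeping across the three regimes $n>0$, $n<0$, and $n=0$, since the definition of $\frac{d}{dx}$ splits into these cases and the telescoping of the index-shifted products must be checked in each. The $n=0$ case is immediate: the left side gives $\phi(1)=1$ while the right side gives $\ell_{0}(x)\frac{d}{dx}\ell_{1}(x) = \ell_{0}(x)\cdot\ell_{0}(x)^{-1}=1$. For $n>0$ and $n<0$ the verification reduces to matching the endpoints of the shifted product ranges, which is routine once the shift is tracked carefully. I do not anticipate any genuine difficulty beyond this indexing care, and the whole argument should be short.
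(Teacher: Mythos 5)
Your proposal is correct and matches the paper's proof: the paper likewise reduces to checking the identities on the generators $\ell_{n}(x)$ (invoking that the maps involved are derivations composed with homomorphisms) and then carries out the routine index-shifting verification, illustrating one case explicitly. Your additional observation that the second identity follows formally from the first by conjugating with $\phi^{-1}$ is a minor tidy bonus but not a different method.
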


This proposition makes clear that, on the appropriate space,
$e^{y\frac{d}{dx}}$ and $e^{y\ell_{0}(x)\frac{d}{dx}}$ are simply shifted (in
terms of the subscripts of $\ell_{n}(x)$) versions of each other.

\begin{proof}
Since $\frac{d}{dx}$ and $\ell_{0}(x)\frac{d}{dx}$ are derivations we
need only check the action on $\ell_{n}(x)$ $n \in \mathbb{Z}$.  The
verification is routine calculation. For instance:

For $n > 1$
\begin{align*}
\ell_{0}(x)\frac{d}{dx}\phi \ell_{-n}(x)=\ell_{0}(x)\frac{d}{dx}
\ell_{-n+1}(x)&=\ell_{0}(x)\prod_{i=-1}^{-n+1}\ell_{i}(x)\\
&=\prod_{i=0}^{-n+1}\ell_{i}(x)\\
&=\phi \prod_{i=-1}^{-n}\ell_{i}(x)\\
&=\phi \frac{d}{dx}\ell_{-n}(x).
\end{align*}

\end{proof}

We then have the following two examples of the ``lifting'' process
referred to in the introduction to this section:

\begin{align*}
e^{y\ell_{0}(x)\frac{d}{dx}}\ell_{0}(x)&=\phi \circ e^{y\frac{d}{dx}}
\phi^{-1}(\ell_{0}(x)) \\ 
&=\phi \circ e^{y\frac{d}{dx}} \ell_{-1}(x) \\ 
&=\phi \circ \sum_{n \geq 0}\frac{y^{n}}{n!} \ell_{-1}(x) \\
&=\ell_{0}(x)e^{y},
\end{align*}
and
\begin{align*}
e^{y\ell_{0}(x)\frac{d}{dx}}\ell_{1}(x)
&=\phi \circ e^{y\frac{d}{dx}}
\phi^{-1}(\ell_{1}(x)) \\
 &=\phi \circ e^{y\frac{d}{dx}} \ell_{0}(x) \\ 
&=\phi(\ell_{0}(x)+y) \\ 
&=\ell_{1}(x)+y,
\end{align*}
which translate respectively to the following identities in more
standard logarithmic notation:
\begin{align*}
e^{yx\frac{d}{dx}}x&=xe^{y}\\
e^{yx\frac{d}{dx}}\log x&=\log x+y.
\end{align*}

\begin{remark} \rm
Of course, these examples can be obtained much more easily without
resorting to this method but in more involved examples this approach
is very useful (see e.g. \cite{R1}).
\end{remark}

\begin{remark} \rm
Although we do not give a precise definition here, it is maps like
$\phi$ that we call differential representations.  For more on these
differential representations see \cite{R1} and \cite{R2}.
\end{remark}

\section{Some combinatorics}
\label{sec:comb}
The original (algebraic) proof in \cite{HLZ} of the logarithmic formal Taylor
theorem used formal analytic expansions (in fact, so did the
statement).  We have bypassed those expansions in our approach, but they
are themselves of some interest.  For instance, the original proof relied
on a combinatorial identity arising from equating the coefficients of
two different formal analytic expansions.

We shall not get into the details of calculating formal analytic
expansions here, but instead, merely briefly state some results to
give the reader some idea of the material involved.  It is possible to
calculate the following three formal analytic expressions for
$\ell_{n}(x+y)^{r}$, where we fix $r \in \mathbb{C}$ (see \cite{R1}):

\begin{align*}
e^{y\frac{d}{dx}}\ell_{n}(x)^{r}=
\ell_{n}(x+y)^{r}&=
\sum_{j_{0}, \dots, j_{n} \geq 0}
\left( \prod_{i=0}^{n-1}
\begin{bmatrix}
j_{i} \\
j_{i+1} \\
\end{bmatrix}
\right) (-1)^{j_{0}+j_{n}} \frac{j_{n}!}{j_{0}!}\cdot\\ 
& \quad \cdot
\binom{r}{j_{n}} \ell_{n}(x)^{r}
\left(\prod_{i=0}^{n}\ell_{i}(x)^{-j_{i}}\right)y^{j_{0}}\\ 
&=\sum_{k
\geq 0}\frac{y^{k}}{k!}  \sum_{1 \leq j_{n} \leq \cdots \leq j_{1}
\leq j_{0}=k} j_{n}!\binom{r}{j_{n}}(-1)^{j_{0}+j_{n}} \cdot \\ 
&\quad \cdot S(j_{n},\dots, j_{0})
\ell_{n}(x)^{r-j_{n}}\ell_{n-1}^{-j_{n-1}} \cdots \ell_{0}^{-k}\\
&=\sum_{k \geq 0}\frac{y^{k}}{k!}\underset{0 \leq j_{0},j_{1},\dots,
j_{n}}{\sum_{j_{0}+j_{1}+\cdots+j_{n}=k}} j_{n}!\binom{r}{j_{n}}
\cdot\\ 
& \quad \cdot \left(\prod_{i=0}^{n-1}
(j_{i};\alpha_{i+1})\right)
\ell_{n}(x)^{r}\left(\prod_{i=0}^{n}\ell_{i}(x)^{-\alpha_{i}}\right),
\end{align*}
where
\begin{align*}
\alpha_{i}=\sum_{l=i}^{n}j_{l},
\end{align*}
\begin{align*}
\begin{bmatrix}
k \\
j \\
\end{bmatrix}
=\frac{k!}{j!}\underset{i_{l} \geq 1}{\sum_{i_{1}+\cdots+
i_{j}=k}}\frac{1}{i_{1}\cdots i_{j}},
\end{align*}
\begin{align*}
(m;n)=(-1)^{m}\sum_{0 \leq i_{1} < i_{2}< \cdots <i_{m} \leq
m+n-1}i_{1}i_{2}\cdots i_{m},
\end{align*}
and where $S(j_{n}, j_{n-1}, \dots , j_{0})$ is 
given by the following recursion:
\begin{align*}
S(j_{n}, \dots , j_{0})&=S(j_{n}-1,j_{n-1}-1, \dots , j_{0}-1)\\
& +(j_{n-1}-1)S(j_{n},j_{n-1}-1, \dots , j_{0}-1)\\
& \,\,\, \vdots\\
& +(j_{0}-1)S(j_{n},j_{n-1}, \dots ,j_{1}, j_{0}-1),
\end{align*}
along with the initial conditions,
\begin{align*}
S(j_{n},j_{n-1},\dots, j_{1}, 1)=
\left\{
\begin{array}{lr}
1&j_{n}=j_{n-1}=\cdots=j_{1}=1\\
0&\text{otherwise.}
\end{array}
\right.
\end{align*}
Equating coefficients yields the identity
\begin{align*}
S(j_{n},j_{n-1}, \dots ,j_{0})&=
\prod_{i=0}^{n-1}
\begin{bmatrix}
j_{i} \\
j_{i+1} \\
\end{bmatrix},
\end{align*}
for $j_{0} \geq j_{1} \geq \cdots \geq j_{n} \geq 1$ and $0 \leq s
\leq j_{n}$.  When we specialize to the $n=1$ case (the single
logarithm case), we get
\begin{align*}
(m;n)=(-1)^{m}\begin{bmatrix}
m+n \\
n \\
\end{bmatrix},
\end{align*}
and, more generally,
\begin{align}
\label{Lubell}
S(m,n)&=
\begin{bmatrix}
n \\
m \\
\end{bmatrix}=  
\frac{n!}{m!}\underset{i_{l} \geq 1}{\sum_{i_{1}+\cdots
+i_{m}=n}}\frac{1}{i_{1}\cdots i_{m}} \nonumber\\ 
& = \sum_{0 \leq
i_{1} <i_{2} < \cdots <i_{n-m} \leq n-1}i_{1}\cdots i_{n-m}, \\
\nonumber
\end{align}
where $S(m,n)$ satisfies a standard recurrence for the Stirling
numbers of the first kind.

\begin{remark} \rm
Most of the identity (\ref{Lubell}) appeared in Section 3 of
\cite{HLZ} in the course of a ``traditional-style'' algebraic proof of
a logarithmic formal Taylor theorem.  See also Remark 3.8 in
\cite{HLZ}, where this identity was observed to solve a problem posed
by D. Lubell in the Problems and Solutions section of the American
Mathematical Monthly \cite{Lu}.  The reappearance of this classical
identity in this context was one of the original motivations for the
present paper.
\end{remark}

\begin{remark} \rm
We also note that while the formal analytic expansions presented in
this section could serve as definitions they would obviously be
unwieldy.
\end{remark}

\section{Fa\`a di Bruno and umbral calculus}
\label{sec:umb}
There are some interesting variants of the notion of differential
representation.  In fact, one such variant appears implicitly in the
proof of Fa\`a di Bruno's formula in Proposition 8.3.4 of \cite{FLM},
an argument which is essentially the basis for proving the
(highly-nontrivial) ``associativity'' property of lattice vertex
operator algebras in a setting based on arbitrary rational lattices;
see Sections 8.3 and 8.4 of \cite{FLM}.  We present a special case of
this argument next.

Let $x,y,z$ be formal commuting variables.  Let $f(x), g(x) \in
\mathbb{C}[x]$. Then
\begin{align}
e^{y\frac{d}{dx}}f(g(x))&=f(g(x+y)) \nonumber\\
&=f(g(x)+(g(x+y)-g(x)))\nonumber\\ 
&=e^{(g(x+y)-g(x))\frac{d}{dz}}f(z)
|_{z=g(x)}\nonumber\\ 
&=\sum_{n \geq
0}\frac{f^{(n)}(z)(g(x+y)-g(x))^{n}}{n!} |_{z=g(x)}\nonumber\\
&=\sum_{n \geq 0}\frac{f^{(n)}(g(x))(g(x+y)-g(x))^{n}}{n!}\nonumber\\
&=\sum_{n \geq 0}\frac{f^{(n)}(g(x))
\left(e^{y\frac{d}{dx}}g(x)-g(x)\right)^{n}}{n!}\nonumber\\ 
&=\sum_{n\geq 0}\frac{f^{(n)}(g(x))\left(\sum_{m \geq
1}\frac{y^{m}g^{(m)}(x)}{m!}\right)^{n}}{n!}. \label{eq:FDBG}
\end{align}

Motivated by this, we consider the algebra
$\mathbb{C}[y_{0},y_{1},y_{2},\dots, x_{1},x_{2}, \dots]$ where
$y_{i}, x_{j}$ for $i \geq 0$ and $j \geq 1$ are commuting formal
variables.  Let $D$ be the unique derivation on
$\mathbb{C}[y_{0},y_{1},y_{2},\dots, x_{1},x_{2}, \dots]$ satisfying
the following:

\begin{align*}
Dy_{i}&=y_{i+1}x_{1} \qquad \qquad i \geq 0\\
Dx_{j}&=x_{j+1} \qquad \qquad \,\,\,\, j \geq 1.
\end{align*}
Then this question of calculating $e^{y\frac{d}{dx}}f(g(x))$ is seen
to be essentially equivalent to calculating
\begin{align*}
e^{zD}y_{0},
\end{align*}
where we ``secretly,'' loosely speaking, identify $\frac{d}{dx}$ with
$D$, $f^{(n)}(g(x))$ with $y_{n}$ and $g^{(m)}(x)$ with $x_{m}$ (and
$y$ with $z$).  The reader may note that we are now really dealing
with, among other things, a certain sort of completion of the original
problem, so that one may, for instance, wish to view $f(x)$ as a
formal power series and $g(x)$ as a formal power series with zero
constant term, and indeed we note that it was in this generality (and
with even more general derivations) that the above argument was
carried out in \cite{FLM}.  For a detailed description of this
material, we refer the reader to \cite{R2}.

Before proceeding, we note that we may write an intermediate step of
(\ref{eq:FDBG}) as
\begin{align*}
e^{y\frac{d}{dx}}\phi(f(z))=\phi\left(
e^{(g(x+y)-g(x))\frac{d}{dz}}f(z)
\right),
\end{align*}
where $\phi: \mathbb{C}[z] \rightarrow \mathbb{C}[x]$ substitutes
$g(x)$ for $z$.  That is, we have a commutative diagram:
\begin{align*}
\begin{CD}
\mathbb{C}[z] @>e^{(g(x+y)-g(x))\frac{d}{dz}}>> \mathbb{C}[x,y,z] \\
@VV\phi V                                          @VV\phi V \\
\mathbb{C}[x] @>e^{y\frac{d}{dx}}>> \mathbb{C}[x,y]. \\
\end{CD}
\end{align*}
This commutative diagram shows how $\phi$ may be regarded as a sort of
``global (i.e., exponentiated) differential representation.''  In our
new setting we might consider looking at ``nonglobal'' differential
representations of $D$.  This turns out to be too restrictive, but a
suitably loosened version of this question turns out to lead to
interesting results.

Let $\phi_{B}$ be the substitution which sends $y_{j}$ to $1$ for $j
\geq 0$ and sends $x_{i}$ to $xB_{i}$ for $i \geq 1$, where $B_{i} \in
\mathbb{C}$ for $i \geq 1$ is a fixed, arbitrary sequence subject to
the requirement that $B_{1} \neq 0$.

\begin{prop}
There is a unique linear map $D_{B}:\mathbb{C}[x] \rightarrow
\mathbb{C}[x]$ which satisfies the condition
\begin{align}
D_{B}^{n} \phi_{B} (y_{0})=\phi_{B} (D^{n}(y_{0})) \qquad n\geq
0. \label{eq:ushift}
\end{align}
\end{prop}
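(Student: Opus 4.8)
The plan is to read off the values that $D_B$ is forced to take, show these determine a unique linear map, and then check that the map works. Set $p_n(x) = \phi_B(D^n(y_0))$ for $n \geq 0$. Since $\phi_B$ sends every $y_j$ to $1$, we have $\phi_B(y_0) = 1$, so condition \eqref{eq:ushift} reads $D_B^n(1) = p_n(x)$ for all $n \geq 0$; in particular $p_0 = 1$. Everything then hinges on understanding the sequence $\{p_n\}$.

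The key step is to show that $\deg p_n = n$. Under $\phi_B$ each $x_i$ contributes a single factor of $x$ (namely $xB_i$) while each $y_j$ contributes a scalar, so the $x$-degree of $p_n$ equals the largest number of $x$-factors appearing in any monomial of $D^n(y_0)$. From the two rules $Dy_i = y_{i+1}x_1$ and $Dx_j = x_{j+1}$ one sees that differentiating a $y$-variable raises the number of $x$-factors by one, whereas differentiating an $x$-variable leaves it unchanged. Starting from the single $y$-factor in $y_0$ and applying $D$ exactly $n$ times, the maximal count $n$ is realized by precisely one monomial, $y_n x_1^n$ (the term obtained by always differentiating the $y$-factor), with coefficient $1$. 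Hence the coefficient of $x^n$ in $p_n$ equals $B_1^n$, which is nonzero because $B_1 \neq 0$, so $\deg p_n = n$.

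Because the $p_n$ have distinct degrees $0, 1, 2, \dots$, the family $\{p_n\}_{n \geq 0}$ is a basis of $\mathbb{C}[x]$; this is the structural fact that makes the statement work, and the degree computation above is essentially its entire content. Granting it, uniqueness is immediate: any $D_B$ satisfying \eqref{eq:ushift} must satisfy $D_B(p_n) = D_B(D_B^n(1)) = D_B^{n+1}(1) = p_{n+1}$, so its values on a basis are forced. For existence I would define $D_B$ on the basis by $D_B(p_n) = p_{n+1}$ and extend linearly; this is well-defined since $\{p_n\}$ is a basis, and it gives $D_B^n(1) = D_B^n(p_0) = p_n$, which is exactly \eqref{eq:ushift}. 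The main (and rather mild) obstacle is the degree claim, since it is the only place the hypothesis $B_1 \neq 0$ enters; once $\{p_n\}$ is known to be a basis, both existence and uniqueness reduce to the trivial shift $p_n \mapsto p_{n+1}$ on that basis.
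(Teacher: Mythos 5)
Your proof is correct and rests on exactly the same key fact as the paper's: that $\phi_B(D^n(y_0))$ has degree $n$ with leading coefficient $B_1^n\neq 0$. The only (cosmetic) difference is that the paper then recursively solves the equations to determine $D_B$ on the monomial basis $\{x^n\}$, whereas you determine it as the shift $p_n\mapsto p_{n+1}$ on the basis $\{p_n\}$; both are routine once the degree claim is in hand.
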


\begin{proof}
It is easy to see that $ \phi_{B} D^{n}(y_{0})$ is a polynomial of
degree $n$ whose leading term is $B_{1}^{n}x^{n}$, where we recall
that this coefficient is nonzero.  Thus each required equality in turn
(indexing by $n$) may be solved to obtain an equation of the form
$D_{B}x^{n-1}=r(x)$, where $r(x)$ is a polynomial of degree $n$.  Of
course this recursive process solves for and completely determines
$D_{B}x^{n}$ for all $n \geq 0$.
\end{proof}

The maps $D_{B}$ are what have been called umbral shifts, as in
\cite{Rom}.  For more on the connection to classical umbral calculus
see \cite{R2}.

\noindent {\small \sc Department of Mathematics, Rutgers University,
Piscataway, NJ 08854} 
\\ {\em E--mail
address}: thomasro@math.rutgers.edu
\end{document}